\documentclass[journal,onecolumn,10pt]{IEEEtran}
\usepackage{amsmath,amssymb,dsfont,graphicx,color,lmodern}
\usepackage{amsthm}
\usepackage[T1]{fontenc}
\usepackage[utf8]{inputenc}
\usepackage{amsmath,amssymb,euscript,yfonts,psfrag,latexsym,dsfont,graphicx}
\usepackage{bbm,color,amstext,wasysym,parskip,balance,cite}

%\documentclass[letterpaper, 10 pt, conference]{ieeeconf}  % Comment this line out
                                                          % if you need a4paper
%\documentclass[a4paper, 10pt, conference]{ieeeconf}      % Use this line for a4
                                                          % paper

%\IEEEoverridecommandlockouts                              % This command is only
                                                          % needed if you want to
                                                          % use the \thanks command
%\overrideIEEEmargins
% See the \addtolength command later in the file to balance the column lengths
% on the last page of the document

% The following packages can be found on http:\\www.ctan.org
%\usepackage{graphics} % for pdf, bitmapped graphics files
%\usepackage{epsfig} % for postscript graphics files
%\usepackage{mathptmx} % assumes new font selection scheme installed
%\usepackage{times} % assumes new font selection scheme installed
%\usepackage{amsmath} % assumes amsmath package installed
%\usepackage{amssymb}  % assumes amsmath package installed

%%%%%%%%%%% My Commands starts here$$$$$$$$$$$$$ 
%\usepackage{comment}

%\usepackage[utf8]{inputenc}
%\usepackage[notref, notcite]{showkeys}
%\usepackage[T1]{fontenc}eq-a
%\usepackage{amsmath,amsfonts,amssymb}
%\usepackage{vmargin,graphicx,theorem}
%\usepackage[english]{babel}
%\usepackage{csquotes}
%\usepackage{enumerate}
%\usepackage{color}
\usepackage{bm}
\usepackage{subfigure}
%\usepackage{pstricks}
%\usepackage{mdwlist}
%\usepackage{auto-pst-pdf}
%\usepackage{pst-fill,pst-grad,pst-plot,pst-eucl,pstricks-add,pst-node}
%\usepackage{slashbox}

%\setpapersize[portrait]{A4}
%\setmarginsrb{1.5cm}{1cm}{1.5cm}{2.5cm}{1.5cm}{0cm}{0.5cm}{2cm}https://www.overleaf.com/project/5c1940790ce7e6191b906a01
%% {marge gauche}{marge haute}{marge droite}{marge basse}
%% {ent\^{e}te}{distance ent\^{e}te-texte}
%% {pied de page}{bas de page - bas du pied de page}

%\selectlanguage{english}

%%%%%%%%%%%%%%%%%%%%%%%%%%%%%%%%%%%%%%%%%%%%%%MACROS%%%%%%%%%%%%%%%%%%%%%%%%%%%%%%%%%%%%%%%%%%%%%%%\`
%\include{macros}
%\newcommand{\affil}[1]{{\small\sl #1}}
%\newcommand{\email}[1]{{\small E-mail: {\textsf {#1}}}}
%\newcommand{\http}[1]{{\small Internet: {\textsf {#1}}}}

%%\newcommand{\red}{\color{red}\tt }
%\newcommand{\gre}{\color{green}\tt }

%%% Doubles lettres %%%

\newcommand{\Id}{\mathrm{Id}}
\newcommand{\ud}{\frac{1}{2}}

\newcommand\x{\mathbf{x}}
\newcommand\y{\mathbf{y}}

\newcommand{\Rd}{\mathbf{R}^d}

\newcommand\PD{\mathcal{P}_2(\Rd)}
\newcommand\PDD{\mathcal{P}_2(\mathbf{R}^{2d})}

\newcommand{\bol}{\mathbf}

\newtheorem{theo}{Theorem}
\newtheorem{prop}[theo]{Proposition}

\newtheorem{rem}[theo]{Remark}

\newtheorem{defi}[theo]{Definition}
\newtheorem{problem}[theo]{Problem}

%\newenvironment{fproofint}
%               {\noindent {\emph{\textbf{Preuve}}}\\$\lhd$~}
%               {~$\vartriangleright$\\}
%\newenvironment{eproof}
%               {\noindent {\emph{\textbf{Proof}}}\\\proofbegin~}
%               {\proofend\\}
%\newenvironment{fproof}
%               {\noindent {\\\emph{\textbf{Heuristic proof}}}\\\proofbegin~}
%               {\proofend\\}
%\renewcommand{\theenumi}{\roman{enumi}}

%%% Prototype pour les fonctions %%%

%%% Normes et assimil\'{e}es %%%
 % |1|
 % (1)
 % [1]
 % {1}
 % ||1||
 % <1>

%\newcommand{\norm}[3][]{\ensuremath{{\lVert #2 \rVert}^{#1}_{#3}}}

%\newcommand{\entro}{{\mathcal En}}

%%% Quelques symboles %%%
%\renewcommand{\phi}{\varphi}
 % epsilon

%\renewcommand{\le }{\le slant}
%\renewcommand{\ge }{\ge slant}

%\newcommand{\grad}{{\rm grad}}

%%% Remplacer les mathbbm par des mathbb

%%%%%%%% macro Dario

 %domaine
 %esperance
 %espace de lebesgue
 %entiers naturels
 %probabilite

 %entiers relatifs
 %reels
 %reels
 %reels
 %reels
 %reels

%%% Espaces, operateurs
 %espace C^1
 %semigroupe P_1
 %generator
 %operateur gamma 1
 %fonction indicatrice
 % ||1||
 %entropie
 %variance

 %oscillation
 %capacite
%Trace
%\def\Hess{{\rm Hess}} % Hessienne
 % Hessienne
 % petite fraction
 % petite integrale
 %trace
%Trace

%Ricci
%volume

 %dérivée partielle

%DIVERS lettres

%DIVERS Commandes/Raccourcis

\newcommand{\be}{\begin{equation}}
\newcommand{\ee}{\end{equation}}
\def\benu{\begin{enumerate}}
\def\eenu{\end{enumerate}}

%volume

%%%%%%%%%%%%%%%%%%%%%%%%%%%%%%%%%%%%%%%%%%%%%%%%%%%%%%%%%%%%%%%%%%%%%%%%%%%%%%%%%%%%%%%%%%%%%%%%%%%%
%\mathversion{bold}

%%%%%%%%%%%%%%%%%%%%%%%%%%%%%%%%%%%%%%%%%%%%%%%%%%%%%%%%%%%%%%%%%%%%%%%%%%%%%%
\newcommand{\beq}{\begin{equation}}\newcommand{\eeq}{\end{equation}}
\parindent=0pt

%%%%%%%%%%% My Commands ends here$$$$$$$$$$$$$ 

\title{\LARGE \bf
Statistical learning in Wasserstein space
}

%\author{ \parbox{3 in}{\centering Huibert Kwakernaak*
%         \thanks{*Use the $\backslash$thanks command to put information here}\\
%         Faculty of Electrical Engineering, Mathematics and Computer Science\\
%         University of Twente\\
%         7500 AE Enschede, The Netherlands\\
%         {\tt\small h.kwakernaak@autsubmit.com}}
%         \hspace*{ 0.5 in}
%         \parbox{3 in}{ \centering Pradeep Misra**
%         \thanks{**The footnote marks may be inserted manually}\\
%        Department of Electrical Engineering \\
%         Wright State University\\
%         Dayton, OH 45435, USA\\
%         {\tt\small pmisra@cs.wright.edu}}
%}

\author{Amirhossein Karimi, Luigia Ripani, and Tryphon T.\ Georgiou% <-this % stops a space
\thanks{Amirhossein Karimi and Tryphon T.\ Georgiou are with the Department of Mechanical and Aerospace Engineering, University of California,
Irvine, CA, USA
        {\tt\small amirhosk@uci.edu, tryphon@uci.edu}}%
\thanks{Luigia Ripani is with the University of Cergy-Pontoise, France
        {\tt\small luigia.ripani@u-cergy.fr}}%
\thanks{Supported in part by the NSF under grants 1807664, 1839441, and the AFOSR under FA9550-17-1-0435.}
}

\graphicspath{{./},{./pictures/}}
\def\spacingset#1{\def\baselinestretch{#1}\small\normalsize}
\setlength{\parskip}{7pt}
\setlength{\parindent}{10pt}
\spacingset{1}

\begin{document}

\maketitle
\thispagestyle{plain}
\pagestyle{plain}

%%%%%%%%%%%%%%%%%%%%%%%%%%%%%%%%%%%%%%%%%%%%%%%%%%%%%%%%%%%%%%%%%%%%%%%%%%%%%%%%
\begin{abstract}
We seek a generalization of regression and principle component analysis (PCA) in a metric space where data points are distributions metrized by the Wasserstein metric. We recast these analyses as multimarginal optimal transport problems. The particular formulation allows efficient computation, ensures existence of optimal solutions, and admits a probabilistic interpretation over the space of paths (line segments). Application of the theory to the interpolation of empirical distributions, images, power spectra, as well as assessing uncertainty in experimental designs, is envisioned.
\end{abstract}

%%%%%%%%%%%%%%%%%%%%%%%%%%%%%%%%%%%%%%%%%%%%%%%%%%%%%%%%%%%%%%%%%%%%%%%%%%%%%%%%
\section{INTRODUCTION}

The purpose of statistical learning is to identify structures in data, typically cast as relations between explanatory variables. Prime example is the case of principal components where affine manifolds are sought to approximate the distribution of data points in a Euclidean space. The data is then seen as a point-cloud clustering about the manifolds which, in turn, can be used to coordinatize the dataset. Such problems are central to system identification \cite{kalman1982identification,ljung2010perspectives}.

Accounting for measurement noise and uncertainty has been an enduring problem from the dawn of statistics to the present \cite{kalman1982identification}. In this work we view distributions as our principle objects which, besides representing point clouds, may also represent the uncertainty in localizing individual elements in a point set. Our aim is to develop a mathematical framework to allow for an analogue of principle components in the space of distributions. That is, we view distributions as points themselves in a suitable metric space (Wasserstein space) and seek structural features in collections of distributions. 

From an applications point of view, these distributions may delineate density of particles, concentration of pollutants, fluctuation of stock prices, or the uncertainty in the glucose level in the blood of diabetic persons, indexed by a time-stamp, with the goal to capture the principal modes of underlying dynamics (flow). Another paradigm may involve 
intensity images in which case we seek suitable manifolds of distributions that images cluster about. Such manifolds would constitute the analogue of principal components and can be used to parametrize our image-dataset and capture directions with the highest variability. Such a setting could be applied in e.g., in longitudinal image study~\cite{niethammer2011geodesic} where the brain development or tumor growth patterns need to be studied, power spectral tracking~\cite{jiang2011geometric}, traffic control~\cite{hong2014geodesic} and so on. 

There is a very recent and fast growing literature in the direction that we are envisioning, namely, to develop a theory of manifold learning in Wasserstein space.
Most studies so far explore the tangent space of the Wasserstein manifold at a suitably selected reference point (e.g., the barycenter of the dataset \cite{AC2011}) so as to take advantage of the Hilbert structure of the tangent space and identify the dominant modes of variation. Such an approach typically leads to non-smooth and non-convex optimization~\cite{cazelles2018geodesic}.

Herein, we follow a different route in which we place a probability measure on the sought structures (manifolds, modes, interpolating paths that we wish to learn). To illustrate the effect of a probability law on paths, Figure \ref{sample_path} highlights the flow of Gaussian one-time marginals along linear paths that differ in the joint distribution between the corresponding end-point variables. Our approach has several advantages. First, it enjoys a multi-marginal optimal transport~\cite{pass2015multi} formulation and the consequent computational benefits (solvable via linear programming). Second, it provides varying likelihood in the space of admissible paths/curves. When all the target distributions (observations) are Dirac measures, the problem reduces to interpolation in Euclidean space.

\begin{figure}[!htb]  
\centering
\subfigure[Positive correlation]{\resizebox{!}{4cm}{\includegraphics{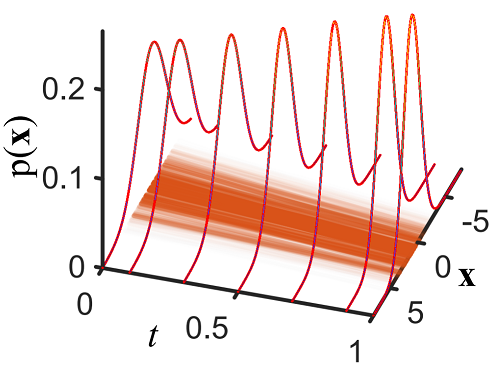}}}
\subfigure[Negative correlation]{\resizebox{!}{4cm}{\includegraphics{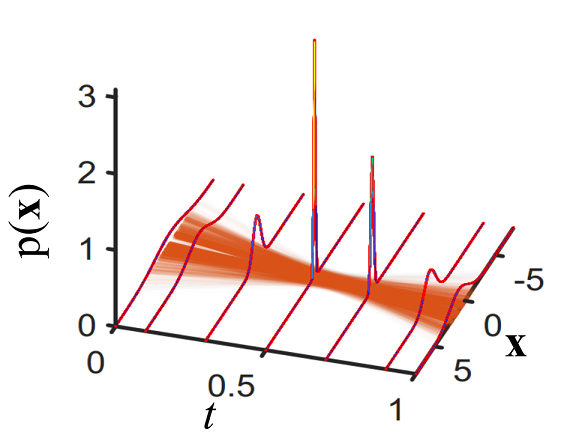}}}
\caption{Marginals along linear paths (lines) with identical end-point distributions and different probability laws (positive and negative correlation between end-point marginals); the intensity of color is proportional to the likelihood of the path.}
\label{sample_path}
\end{figure}

The first attempt to developing principal component analysis (PCA) for probability measures in)~\cite{bigot2017geodesic} utilized the pseudo-Riemannian structure of $(\PD,W_2)$ space~\cite{ambrosio2004gradient}, labeled geodesic PCA (GPCA). Unfortunately, implementation can be challenging even in the simplest case of probability measures supported on a bounded subset of the real line due to non-differentiable and non-convex cost and constraints. Other studies introduce suitable relaxation which however prevents solutions from relating to geodesics in Wasserstein space \cite{cazelles2018geodesic}, \cite{wang2013linear}, \cite{seguy2015principal}. Part of the ensuing problems in developing PCA in $(\PD,W_2)$ are due to the fact that this is a positively curved space \cite{ambrosio2008gradient}. Earlier relevant papers on regression in Wasserstein space include \cite{jiang2011geometric,fletcher2013geodesic}. 

The model of geodesics in $(\PD,W_2)$ as the learning hypothesis comes with an important caveat. At times, it may lead to underfitting in that the hypothesis class, $W_2$-geodesics, i.e., $(\PD,W_2)$-line segments, as opposed to higher order $(\PD,W_2)$-curves \cite{chen2018measure,benamou2018second}) is too restrictive. This may result in temporally distant points to be more strongly correlated than adjacent ones.

In this paper the emphasis is placed on the path space of linear segments from $[0,1]$ to $\Rd$ ($\bol{L}([0, 1],\Rd)$), 
and the problem of interpolating time-indexed target distributions which is cast as seeking a suitable probability law on line segments; it is seen as linear regression lifted to the space of probability distributions. We will discuss how the setting extends to the case where data/distributions are provided with no time-stamp. Interpolation with higher-order curves in $(\PD,W_2)$ that employs multi-marginal formulation is a possible future direction, as well as entropic regularization for computational purposes; these last topics will be noted in \cite{inpreparation}.
%%%%%%%%%%%%%%%%%%%%%%%%%%%%%%%%%%%%%%%%%%%%%%%%%%%%%%%%%%%%%%%%%%%%%%%%%%%%%%%%
\section{Background and notation}
\subsection{Notation}
We denote by $(\PD,W_2)$ the Wasserstein space where $\PD$ is the set of Borel probability measures with finite second moments, and $W_2$ is the Wasserstein distance (whose definition will be recalled later on).
%Also, we denote by $\mathcal{L}^d$ the Lebesgue measure on $\Rd$. 
If a measure $\mu\in \PD$ is absolutely continuous with respect to the Lebesgue measure on $\Rd$, %$\mathcal{L}^d$, 
we will represent by $\mu$ both the measure itself and its density. % with respect to the Lebesgue measure. %$\mathcal{L}^d$.
The push-forward of a measure $\mu$ by the measurable map $T : \Rd \to \Rd$ is denoted by $T_\#\mu = \nu \in \mathcal P(\Rd)$, meaning
$\nu(B) = \mu(T^{-1}(B))$ for every Borel set $B$ in $\Rd$. The Dirac measure at point $\x$ is represented by $\delta_{\x}$. We denote the set of linear paths from the time interval $[0,1]$ to the state space $\Rd$ by $\Omega=\bol{L}([0, 1],\Rd)$. There is a bijective correspondence $(X_{0,1})$ between $\Omega$ and $\Rd \times \Rd$ using the values of each line at $t=0$ and $t=1$ such that any $\omega=(\omega_t)_{t\in[0,1]}\in\Omega$ corresponds to  $X_{0,1}(\omega):=(
\x_0,\x_1)$ for the endpoints $\x_0=\omega_0, \x_1=\omega_1 \in \Rd$. We equip $\Omega$ with the canonical  $\sigma$-algebra generated by the
projection map $X_{0,1}$. 
We often write $\mathbf{R}^{2d}:=\Rd \times \Rd$ as a shorthand.
For any $\pi(\x_0,\x_1)\in \PDD$, the one-time marginals are $\mu_t:=((1-t)\x_0+t\x_1)_\#\pi,~t\in[0,1]$.

\subsection{Background on Optimal Transport Theory}
We now provide a concise overview of optimal mass transportation theory based on  \cite{villani2003topics,ambrosio2013user}.

Let $\mu_0$ and $\mu_1$ be two probability measures in $\PD$. In the Monge formulation of optimal mass transportation, the cost
\begin{equation*}
  \int_{\Rd} ||T(\x)-\x||_2^2 ~\mu_0(d\x)   
\end{equation*}
is minimized over the space of maps
\begin{align*}
    T: \quad &\Rd \rightarrow \Rd\\ \nonumber
    &\x \mapsto T(\x)
\end{align*}
which ``transport'' mass $\mu_0(d\x)$ at $\x$ so as to match the final distribution, namely, $T_\#\mu_0 = \mu_1$. If $\mu_0$ and $\mu_1$ are absolutely continuous, it is known that the optimal transport problem has a unique solution which is the gradient of a convex function $\phi$, i.e., $T^*=\nabla\phi(\x)$.

This problem is highly nonlinear and, moreover, a transport map may not exist as is the case when $\mu_0$ is a discrete probability measure and $\mu_1$ is absolutely continuous.
To alleviate these issues, Kantorovich introduced a relaxation in which one seeks a joint distribution (coupling) $\pi$ on $\Rd\times\Rd$, with the marginals $\mu_0$ and $\mu_1$ along the two coordinates, namely,
\begin{equation*}
W_2^2(\mu_0, \mu_1):=    \hspace*{-10pt}\inf_{\pi \in \Pi(\mu_0, \mu_1)} \int_{\Rd\times\Rd} \hspace*{-5pt}||\bol{x-y}||^2 \pi(d\x d\y)
\end{equation*}
where $\Pi(\mu_0, \mu_1)$ is the space of couplings with marginals $\mu_0$ and $\mu_1$. 
In case the optimal transport map exists, we have $\pi = (\Id \times T^*)_\#\mu_0$,
where $\Id$ denotes the identity map. 

The square root of the optimal cost, namely $W_2(\mu_0, \mu_1)$, defines a metric on $\PD$ referred to as the Wasserstein metric \cite{ambrosio2004gradient,villani2008optimal}. Further, assuming that $T^*$ exists, the constant-speed geodesic between $\mu_0$ and $\mu_1$ is given by
\begin{equation*}
    \mu_t=\{(1-t)\x+tT^*(\x)\}_\#\mu_0,~~0\leq t \leq 1,
\end{equation*}
and is known as {\em displacement} or {\em McCann  geodesic}.
%and it respects the identity
%\begin{equation}
%    W_2(\mu_s,\mu_t)=(t-s)W_2(\mu_0,\mu_1),\quad 0\leq s<t\leq %1.
%\end{equation}

We now discuss two cases of transport problems that are of interest in this paper.
\subsubsection{Transport between Gaussian distributions}
In case $\mu_0 \sim \mathcal N(\bol{m}_0,\bol{\Sigma}_0)$ and $\mu_1 \sim \mathcal N(\bol{m}_1,\bol{\Sigma}_1)$, i.e., both Gaussian, the transport problem has a closed-form solution ~\cite{malago2018wasserstein}. Specifically,
\begin{equation*}T^*:\x\to {\scriptstyle\bol{\Sigma}_0^{-1/2}(\bol{\Sigma}_0^{1/2}\bol{\Sigma}_1\bol{\Sigma}_0^{1/2})^{1/2}\bol{\Sigma}_0^{-1/2}\x},
\end{equation*}
\begin{equation}
\label{Gaussian_W2}
    {\scriptstyle W_2(\mu_0,\mu_1)=\sqrt{||\bol{m}_0-\bol{m}_1||^2+\text{tr}(\bol{\Sigma}_0+\bol{\Sigma}_1-2 \bol{S})} }
\end{equation}
where $\text{tr}(.)$ stands for trace and
\begin{equation}
\label{cross_covariance_Gaussian}
    {\scriptstyle\bol{S}=(\bol{\Sigma}_0\bol{\Sigma}_1)^{1/2}=\bol{\Sigma}_0^{1/2}(\bol{\Sigma}_0^{1/2}\bol{\Sigma}_1\bol{\Sigma}_0^{1/2})^{1/2}\bol{\Sigma}_0^{-1/2}}.
\end{equation}
The McCann  geodesic $\mu_t$ corresponds to Gaussian distributions with mean $\bol{m}_t = (1-t)\bol{m}_0 +t\bol{m}_1$ and covariance
\begin{align}
\label{Geodesic_Gaussian}
    {\scriptstyle\bol{\Sigma}_{t}}&={\scriptstyle(1-t)^2\bol{\Sigma}_0+t^2\bol{\Sigma}_1+t(1-t)(\bol{\Sigma}_0\bol{\Sigma}_1)^{1/2}}\\ \nonumber
    &{\scriptstyle+t(1-t)(\bol{\Sigma}_1\bol{\Sigma}_0)^{1/2}}.
    %\bol{\Sigma}_{t}=\bol{\Sigma}_0^{-1/2}((1-t)\bol{\Sigma}_0+t(\bol{\Sigma}_0^{1/2}\bol{\Sigma}_1\bol{\Sigma}_0^{1/2})^{1/2})^2 \bol{\Sigma}_0^{-1/2}.
\end{align}

\subsubsection{Multi-marginal optimal transportation}
The multi-marginal problem is a generalization of optimal transport in which, for a set of given marginals, a correlating law is sought to minimize a cost function. This problem and its applications are surveyed in \cite{pass2015multi,nenna2016numerical}. The Kantorovich-version of this problem for  marginals $\{\mu_i\}_{i=1}^N$ is to minimize the cost
\begin{equation*}
\int_{\mathbf{R}^{Nd}} C(\x_1,...,\x_N)d\gamma(\x_1,...,\x_N)
\end{equation*}
where $\gamma(\x_1,...,\x_N)\in \mathcal{P}_2(\mathbf{R}^{Nd})$ such that ${\x_i}_{\#}\gamma=\mu_i$. It is a linear optimization problem over a weakly compact and convex set for which the numerical methods to solve it efficiently are well studied \cite{nenna2016numerical,benamou2019generalized}.

\section{Least-squares in $W_2$}
We start by highlighting the analogy with least squares in Euclidean setting.
Here, given a dataset $\{(t_i,\x_i)\}_{i=1}^N$, we seek a linear model
$
    \x=\x_0+t\bol{v}+\bm{\epsilon},
$
where $\x_0\in \Rd$ and $\bol{v}\in \Rd$ serve as the initial point and tangent vector, respectively, and $\bm{\epsilon}$ represents $\Rd$-valued measurement error. Least squares estimation provides a choice for $\x_0$ and $\bol{v}$ with the least variance among all linear unbiased estimators, assuming that errors are uncorrelated having zero mean and finite variance.

In our setting, the data points lie in the probability space $\PD$, and this entails a probabilistic representation of lines that generalizes linear regression to $(\PD,W_2)$ as discussed below. 
\begin{problem}
Given $\{\mu_{t_i}\}_{i=1}^{N}\in \PD$ a family of probability measures indexed by timestamps $\{t_i\}_{i=1}^{N}$, determine  
\be\label{eq:primal}
\inf_{(g_t)_{t\in [0,1]}\in \mathcal{G_R}} \sum_{i=1}^{N}\frac{1}{N} W_2^2(g_{t_i},\mu_{t_i})
\ee
where 
$%\begin{align*}
\mathcal{G_R}=\{ %(g_t)_{t\in [0,1]} | 
{\scriptstyle g_t=((1-t)\mathbf{x}_0+t\mathbf{x}_1)_\#\pi \mid \pi(\mathbf{x}_0,\mathbf{x}_1)\in \mathcal{P}_2(\mathbf{R}^{2d})}\}.
$%\end{align*}

\end{problem}
Naturally, $g_t$, for $t\in [0,1]$, represent one-time marginals. Also, $\mathcal{G_R}$ includes as a subset the set of all geodesics on $\PD$. Indeed, any $\pi(\mathbf{x}_0,\mathbf{x}_1)$ which is an optimal coupling between two arbitrary distributions on $\PD$, matches a geodesic on $\PD$. Therefore, (\ref{eq:primal}) seeks the least sum of squared distances over a set that contains constant-speed geodesics.

The search space, $\mathcal{G_R}$, is the building block of this study which comes with noteworthy properties. First, it leads to a multi-marginal transport problem (Section \ref{Multi_marginal_sec}). Computationally, this is highly efficient %acious
as it amounts to a linear programming problem. Besides, entropic regularization can be used to considerable advantage (owing to the application of Sinkhorn's algorithm).

The problem amounts to finding a probability measure over the space of lines which defines a flux over linear paths for $t\in [0,1]$. %This flux represents the amount of mass flowing through each line which starts from a point $\x_0$ at $t=0$ and meets another point $\x_1$ at $t=1$. 
%Besides, the curves in $\mathcal{G_R}$, unlike constant-speed geodesics, don't suffer from the interpretation issue of their extension beyond the time region $[0,1]$ as needed for prediction purposes. 
It should be noted that $t$ is taken to lie within $[0,1]$ for simplicity, however, we can extend the range of $t$ to any arbitrary interval. The obtained curve in $\PD$ can be used for prediction as one can extrapolate to values of $t$ beyond the range in the observed data-set. 

We first address the absolute continuity of the elements in $\mathcal{G_R}$, a property of ``smooth'' curves in $(\PD,W_2)$ recalled below.

\begin{defi}[\cite{ambrosio2008gradient}]
Let $\mu_t: (0, 1) \rightarrow \PD$ be a one-parameter family in $\PD$; we say that $(\mu_t)_{t\in[0,1]}$ belongs to $AC(0, 1; \PD )$, if there exists $m \in L^1(0, 1)$ such that
$$W_2(\mu_s, \mu_t) \leq \int_{s}^t m(r) dr, \quad \forall ~0 < s \leq t < 1.
$$
\end{defi}

Absolutely continuous curves are important due to the boundedness of their metric derivative, namely $\mu_t'$, by $m(t)$~\cite{ambrosio2008gradient}, i.e.,
$$\mu_t':= \lim_{s\rightarrow t}{\frac{W_2(\mu_s, \mu_t)}{|s-t|}}\leq m(t).$$
The next theorem establishes absolute continuity of elements in $\mathcal{G_R}$.
\begin{theo}
For any $(g_t)_{t\in[0,1]}\in \mathcal{G_R}$ where $g_t=((1-t)\mathbf{x}_0+t\mathbf{x}_1)_\#\pi,~ t\in[0,1]$, we have $(g_t)_{t\in[0,1]} \in \text{AC}(0,1;\PD)$.
\end{theo}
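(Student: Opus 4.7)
The plan is to exhibit, for any $0 < s \leq t < 1$, an explicit admissible coupling between $g_s$ and $g_t$ and to bound the Wasserstein distance by the $L^2(\pi)$-norm of the displacement along the underlying linear paths. Specifically, I would define the map
\[
F_{s,t}:\mathbf{R}^{2d}\to\mathbf{R}^{2d},\qquad (\mathbf{x}_0,\mathbf{x}_1)\mapsto \bigl((1-s)\mathbf{x}_0+s\mathbf{x}_1,\;(1-t)\mathbf{x}_0+t\mathbf{x}_1\bigr),
\]
and set $\gamma_{s,t}:=(F_{s,t})_\#\pi$. By construction of $g_t$, the marginals of $\gamma_{s,t}$ are exactly $g_s$ and $g_t$, so $\gamma_{s,t}\in\Pi(g_s,g_t)$ is an admissible coupling.

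Next I would use this coupling to bound the squared Wasserstein distance. A direct computation gives
\[
\bigl((1-s)\mathbf{x}_0+s\mathbf{x}_1\bigr)-\bigl((1-t)\mathbf{x}_0+t\mathbf{x}_1\bigr)=(t-s)(\mathbf{x}_0-\mathbf{x}_1),
\]
so by the definition of $W_2^2$ as an infimum over couplings,
\[
W_2^2(g_s,g_t)\le \int_{\mathbf{R}^{2d}}\|\mathbf{x}_0-\mathbf{x}_1\|^2\,(t-s)^2\,d\pi(\mathbf{x}_0,\mathbf{x}_1)=(t-s)^2\,M^2,
\]
where $M:=\bigl(\int\|\mathbf{x}_0-\mathbf{x}_1\|^2\,d\pi\bigr)^{1/2}$. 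Because $\pi\in\mathcal{P}_2(\mathbf{R}^{2d})$ has finite second moments, $M<\infty$ (simply expand $\|\mathbf{x}_0-\mathbf{x}_1\|^2\le 2\|\mathbf{x}_0\|^2+2\|\mathbf{x}_1\|^2$ and integrate).

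Taking square roots yields $W_2(g_s,g_t)\le M\,|t-s|=\int_s^t M\,dr$, so the definition of $\text{AC}(0,1;\PD)$ is satisfied with the constant function $m(r)\equiv M\in L^1(0,1)$. This also shows that the metric derivative satisfies $g_t'\le M$ for a.e.\ $t$. There is no real obstacle here beyond checking that the coupling is valid and invoking the finiteness of the second moment of $\pi$; the key observation is that the linearity of the map $t\mapsto (1-t)\mathbf{x}_0+t\mathbf{x}_1$ factors out the time increment $(t-s)$ cleanly, producing a uniform Lipschitz bound in $t$ rather than merely a local $L^1$ estimate.
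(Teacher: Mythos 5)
Your proof is correct and follows essentially the same route as the paper's: bound $W_2^2(g_s,g_t)$ by $(t-s)^2$ times the second moment of $\mathbf{x}_0-\mathbf{x}_1$ under $\pi$, then take square roots to get a constant $m\in L^1(0,1)$. You simply make explicit the coupling $(F_{s,t})_\#\pi$ that the paper leaves implicit, which is a welcome clarification but not a different argument.
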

\begin{proof} For $0 < s \leq t < 1$ , $W_2^2(g_s,g_t)$ is bounded by
$
  (t-s)^2 c 
   $ where $c$ is the bound derived from the finiteness of the second moments of $\pi$.
Therefore,
$W_2(g_s,g_t)\leq (t-s)\sqrt{c}=\int_{s}^t \sqrt{c} dr$.
\end{proof}

A natural question arises as to whether  
\[
F(\pi):=\sum_{i=1}^{N}\frac{1}{N} W_2^2(((1-t_i)\mathbf{x}_0+t_i\mathbf{x}_1)_\#\pi,\mu_{t_i}),
\]
originating in (\ref{eq:primal}), is strictly displacement convex
with respect to $\pi$;
a functional on $\PD$ is said to be (strictly) displacement convex if it is (strictly) convex along geodesics on $\PD$ in the classical sense~\cite{mccann1997convexity,villani2003topics}.
This notion of convexity inherits properties of classical convex analysis, in particular, strict displacement convexity guarantees uniqueness of the minimizer.

Unfortunately, the following counter-example reveals that $F(\pi)$ may not be displacement convex, in general.
Consider $\pi_0 := \ud\delta_{(0,1)} + \ud\delta_{(3,2)}$ and $\pi_1 := \ud\delta_{(0,0)} + \ud\delta_{(-3,2)}$, and a single data point
$\mu_{\{t=1\}} = \ud \delta_{1} + \ud \delta_{2}$ at $t=1$. The unique optimal map which pushes forward $\pi_0$ to $\pi_1$, maps $(0, 1)$ into $(-3, 2)$ and $(3, 2)$ into $(0, 0)$, and thus the displacement interpolation $\pi_s$ between these two measures is $\pi_s = \ud\delta_{(-3s,s+1)} + \ud\delta_{(3-3s,2-2s)}, ~s \in [0, 1]$.
Computing $F(\pi_s)$ along $(\pi_s)_{0\leq s\leq 1}$ gives 
\begin{align*}
F(\pi_s)= &  W_2^2({\mathbf{x}_1}_\#\pi_s,\mu_{\{t_i=1\}})\\
=&\min(\frac{5}{2}s^2,\frac{5}{2}s^2-3s+1),
\end{align*}  
which fails to be convex in $s$.

Thus, in the next section, we provide a multi-marginal optimal transport formulation for (\ref{eq:primal})  which helps circumvent issues with the inherent non-linearity and provides a handle on solutions.

\section{Multi-marginal reformulation} \label{Multi_marginal_sec}

%\be
%\mathcal{G}=\{((1-t)\mathbf{x}_0+t\mathbf{x})\#\pi^*|\pi^*(\mathbf{x}_%0,\mathbf{x})\in \mathcal{P}_2(\mathbf{R}^{2d}) ~\text{is an optimal %coupling}\}
%\ee

%\be\label{eq:second}
%\inf_{\nu_t\in G(\mu_0)} \sum_{i=1}^{N}\lambda_i %W_2^2(\nu_{t_i},\mu_{t_i})
%\ee
%where $G(\mu_0)$ is the space of all geodesics starting from $\mu_0$, %i.e.,
%\be
%G(\mu_0)=\{((1-t)\text{Id}+t\nabla\phi(\mathbf{x}))\#\mu_0|\phi(\math%bf{x})\text{ is any convex function}\}
%\ee

The following gives a multi-marginal formulation of \eqref{eq:primal} and establishes properties of the minimizer.

\begin{theo}\label{theo:mm} Problem (\ref{eq:primal}) assumes the following multimarginal formulation
\begin{align}
& {\scriptstyle\inf_{\substack{\pi}} F(\pi)= \scriptstyle\inf_{\substack{\gamma}} %\int_{\mathbf{R}^{\scriptscriptstyle (N+2)d}} 
\int \sum_{i=1}^{N}\frac{1}{N}\nonumber ||(1-t_i)\x_0+t_i\x_1-\y_i||^2d\gamma }\\
& {\scriptstyle \quad \quad \quad \text{subject to}\quad {\y_i}_\#\gamma=\mu_{t_i}(\y_i), \forall i=1,...,N}.  \label{main_eq0}
\end{align}
where $\gamma(\x_0,\x_1,\y_1,...,\y_N)\in \mathcal{P}_2(\mathbf{R}^{(N+2)d})$
and $\pi(\x_0,\x_1)\in \mathcal{P}_2(\mathbf{R}^{2d})$. Moreover, the minimizer for $F(\cdot)$ exists and is  $\pi^{*}=(\x_0,\x_1)_\#\gamma^{*}$ where $\gamma^{*}$ is the minimizers of the right-hand side in \eqref{main_eq0}.
\end{theo}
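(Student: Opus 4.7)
The plan is to prove the two infima coincide by a standard gluing/disintegration argument, and then to deduce existence of a minimizer for $F$ from existence for the multimarginal problem via the direct method of the calculus of variations. Once equality of values is established, the representation $\pi^{*}=(\x_0,\x_1)_{\#}\gamma^{*}$ is immediate, since the construction in each direction transfers minimizers between the two formulations.

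First I would establish the inequality $\inf F \leq \inf_\gamma (\cdots)$. Fix any admissible $\gamma$ and set $\pi:=(\x_0,\x_1)_{\#}\gamma \in \PDD$. For each $i$, the push-forward $\sigma_i:=((1-t_i)\x_0+t_i\x_1,\y_i)_{\#}\gamma$ is a coupling between $g_{t_i}=((1-t_i)\x_0+t_i\x_1)_{\#}\pi$ and $\mu_{t_i}$, hence by definition of $W_2^2$,
\[
\int \|(1-t_i)\x_0+t_i\x_1-\y_i\|^2\,d\gamma \;=\; \int\|z-y\|^2\,d\sigma_i(z,y) \;\geq\; W_2^2(g_{t_i},\mu_{t_i}).
\]
Summing with weights $1/N$ and taking the infimum over $\gamma$ delivers the bound.

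For the reverse inequality I would use a disintegration/gluing construction. Given $\pi \in \PDD$ and, for each $i$, an optimal coupling $\sigma_i^{*} \in \Pi(g_{t_i},\mu_{t_i})$ realizing $W_2^2(g_{t_i},\mu_{t_i})$, disintegrate $\sigma_i^{*}(dz_i,d\y_i)=g_{t_i}(dz_i)\,\sigma_i^{z_i}(d\y_i)$ and define
\[
\gamma(d\x_0,d\x_1,d\y_1,\ldots,d\y_N)\;:=\;\pi(d\x_0,d\x_1)\,\prod_{i=1}^N \sigma_i^{(1-t_i)\x_0+t_i\x_1}(d\y_i).
\]
A direct check confirms that $(\x_0,\x_1)_{\#}\gamma=\pi$, $(\y_i)_{\#}\gamma=\mu_{t_i}$, and that the multimarginal cost of $\gamma$ equals $F(\pi)$. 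Combining both inequalities gives equality of the infima.

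For existence I would invoke the direct method applied to the multimarginal formulation. The feasible set is nonempty (take any product measure with $\pi$ of finite second moment) and the cost is nonnegative and continuous. For a minimizing sequence $\gamma^n$, boundedness of the cost together with $\mu_{t_i}\in\PD$ yields uniform control on $\int \|(1-t_i)\x_0+t_i\x_1\|^2\,d\gamma^n$ for every $i$; provided at least two of the $t_i$ differ, this linearly independent system recovers uniform second-moment bounds on the $\x_0$ and $\x_1$ marginals, yielding tightness of $\{\gamma^n\}$ by Prokhorov. Narrow lower semicontinuity of the quadratic cost then selects a minimizer $\gamma^{*}$, and setting $\pi^{*}:=(\x_0,\x_1)_{\#}\gamma^{*}$ together with the equality of values above shows $\pi^{*}$ minimizes $F$. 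The main obstacle I anticipate is precisely this tightness step: one must verify that the available linear combinations $(1-t_i)\x_0+t_i\x_1$ span enough directions to control $\x_0$ and $\x_1$ separately, which is automatic under any mild assumption of two distinct observation times but would otherwise require a separate argument.
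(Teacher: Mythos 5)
Your proof follows essentially the same route as the paper's: the lower bound via the induced coupling $((1-t_i)\x_0+t_i\x_1,\y_i)_{\#}\gamma$, the matching upper bound by disintegrating the optimal couplings with respect to $g_{t_i}$ and gluing them along the map $(1-t_i)\x_0+t_i\x_1$ over a common $\pi$, and then transferring the multimarginal minimizer to $\pi^{*}=(\x_0,\x_1)_{\#}\gamma^{*}$. The only substantive difference is that you establish existence of $\gamma^{*}$ from scratch by the direct method --- correctly flagging that tightness in the $(\x_0,\x_1)$ coordinates requires at least two distinct $t_i$ --- whereas the paper simply cites the known existence result for multimarginal problems with quadratic cost.
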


%\begin{rem} To provide a terse representation of (\ref{main_eq0}), we name the %functionals on the left and right as $F(\pi)$ and $G(\gamma)$, respectively, so %that the above theorem can be recast as:
%\begin{align}
%\label{main_eq}
%\inf_{\substack{\pi(\x_0,\x_1)\in \mathcal{P}_2(\mathbf{R}^{2d})}} %F(\pi)=\inf_{\substack{\gamma(\x_0,\x_1,\y_1,...,\y_N)\in %\mathcal{P}_2(\mathbf{R}^{(N+2)d}),\\{\y_i}_\#\gamma=\mu_{t_i}(\y_i)}} %G(\gamma).
%\end{align}
%\end{rem}

%It should be noted that we have not proved the existence of the infimum of $F(\mu)$ and it should be reckoned as the part of the theorem which entails a proof, however,

%\begin{eproof}
%\end{eproof}

\begin{proof}
We sketch the main steps of the proof and we refer to \cite{inpreparation} for details. 
First, suppose $\pi(\x_0,\x_1)\in \mathcal{P}_2(\mathbf{R}^{2d})$ and $\mu(\y)\in \PD$ are given such that $g_t=\{(1-t)\x_0+t\x_1\}_\#\pi,~t\in[0,1]$ and $\eta(\x_0,\x_1,\y)\in \Pi(\pi,\mu)$, namely, a coupling between $\pi$ and $\mu$. We define
\begin{equation*}%\label{eq:weta}
    W_{\eta}(g_{t},\mu):=\int_{\mathbf{R}^{3d}} ||(1-t)\x_0+t\x_1-\y||^2d\eta.
\end{equation*}
It is easy to check that $W_{2}^{2}(g_{t},\mu) \leq W_{\eta}^2(g_{t},\mu),~\forall t\in [0,1].$ We can show the tightness of this inequality for some $\eta$, namely, $W_{2}^{2}(g_{t},\mu) =\min_{\eta(\x_0,\x_1,\y)\in \Pi(\pi,\mu)} W_{\eta}^2(g_{t},\mu)$.
This can be shown by the application of Gluing Lemma  (see Proposition 7.3.1 in \cite{ambrosio2008gradient}). Using the Disintegration Theorem~\cite{ambrosio2008gradient}, we can extend this result to a family of measures $\{\mu_{t_i}(\y_i)\}_{i=1,...,N}\in \PD$ to show that 
\begin{align*}\scriptstyle
%\label{Prop_5_eq}
\sum_{i=1}^N W_{2}^{2}(g_{t_i},\mu_{t_i}) =\min_{\substack{\gamma\in \mathcal{P}_2(\mathbf{R}^{(N+2)d})}} &\scriptstyle\sum_{i=1}^N W_{\gamma}^2(g_{t_i},\mu_{t_i}) \nonumber \\
&\scriptstyle\hspace*{-4.3cm}\text{subject to}\quad {\y_i}_\#\gamma=\mu_{t_i}(\y_i), 
 (\x_0,\x_1)_\#\gamma=\pi.
\end{align*}
Let $\gamma^*$ be any minimizer of the right-hand side term in~(\ref{main_eq0}) which exists due to the existence of solution to the multi-marginal problem with quadratic cost~\cite{chen2018measure}. It follows that $\pi^*=(\x_0,\x_1)\#\gamma^*$ minimizes left-hand side of (\ref{main_eq0}). 
\end{proof}

\begin{rem}\rm 
In Lagrangian particle tracking, one can track the Lagranigian particles resident in a fluid to measure the desired properties of them at several time steps. This tracking technique results in not only distributions of particles but also {\em correlation between different points in time}. In the proposed approach, the correlation of data can be incorporated by adding the linear constraints $(\y_i,\y_j)_{\#}\gamma=\pi_{t_it_j}$, where $\pi_{t_it_j}$ represents the joint at time steps $t_i$ and $t_j$. %In Gaussian setting, this amounts to constraining the cross-covariance matrix $\bol{S}_{{\y_i}{\y_j}}$. These are linear constraints which can be treated in the semi-definite problem. 
\end{rem}

\begin{rem}\rm 
The linear problem in multi-marginal formulation (\ref{main_eq0}) is  computationally burdensome. A well-known strategy to alleviate this is to regularize the cost by entropy which makes the problem strictly convex. Then, we can employ iterative Bergman projections~\cite{benamou2015iterative} to solve the regularized problem.  
\end{rem}
 The following proposition accounts for  consistency of the proposed method with linear regression in a Euclidean space when the target distributions are Dirac measures.
\begin{prop}
If all data points are Dirac measures, i.e., $\mu_{t_i}(\y_i)=\delta_{\mathbf{v}_i}(\y_i),~i=1,...,N$ where $\mathbf{v}_i\in \Rd$, the multi-marginal formulation in (\ref{main_eq0}) reduces to Euclidean regression. 
\end{prop}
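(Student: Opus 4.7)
The plan is to exploit the fact that the constraints ${\y_i}_\#\gamma=\delta_{\mathbf{v}_i}$ force the $\y_i$ coordinates of $\gamma$ to be deterministic, collapsing the multi-marginal problem into a minimization over a single measure on $\mathbf{R}^{2d}$ whose cost is exactly the Euclidean least-squares functional.

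First I would use the Dirac marginal constraints. Since ${\y_i}_\#\gamma=\delta_{\mathbf{v}_i}$, the coordinate $\y_i$ equals $\mathbf{v}_i$ for $\gamma$-almost every point, for each $i=1,\ldots,N$. Therefore any admissible $\gamma$ admits the product disintegration
\[
\gamma(d\x_0,d\x_1,d\y_1,\ldots,d\y_N)=\pi(d\x_0,d\x_1)\otimes\prod_{i=1}^{N}\delta_{\mathbf{v}_i}(d\y_i),
\]
where $\pi:=(\x_0,\x_1)_\#\gamma\in \mathcal{P}_2(\mathbf{R}^{2d})$ is otherwise free. Conversely every such product measure is admissible, so the admissible set for $\gamma$ is in bijection with $\mathcal{P}_2(\mathbf{R}^{2d})$.

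Next I would substitute this factorization into the cost of \eqref{main_eq0}. Once $\y_i=\mathbf{v}_i$ is enforced, the integrand depends only on $(\x_0,\x_1)$ and the problem reduces to
\[
\inf_{\pi\in \mathcal{P}_2(\mathbf{R}^{2d})}\int J(\x_0,\x_1)\,d\pi(\x_0,\x_1),\qquad J(\x_0,\x_1):=\frac{1}{N}\sum_{i=1}^{N}\|(1-t_i)\x_0+t_i\x_1-\mathbf{v}_i\|^2.
\]
Since $J$ is continuous, nonnegative and coercive, $\int J\,d\pi\geq\min_{\mathbf{R}^{2d}}J$ with equality precisely when $\pi$ is concentrated on $\argmin J$. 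Thus the optimal $\pi^*$ can be chosen as the Dirac $\delta_{(\x_0^*,\x_1^*)}$ at any minimizer of $J$.

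Finally I would recognize the reduced problem as classical linear regression. Parametrizing the candidate line $t\mapsto(1-t)\x_0+t\x_1$ by its endpoints $\x_0,\x_1\in\Rd$, minimizing $J$ is the ordinary least-squares fit of the model $t\mapsto(1-t)\x_0+t\x_1$ to the data $\{(t_i,\mathbf{v}_i)\}_{i=1}^{N}$, which is equivalent (via the invertible reparametrization $(\x_0,\bol v)=(\x_0,\x_1-\x_0)$) to the standard affine regression $\x=\x_0+t\bol v$. No serious obstacle arises: the only step needing care is the disintegration of $\gamma$, which is immediate because the Dirac factors remove all freedom in the $\y_i$ coordinates.
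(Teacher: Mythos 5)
Your proof is correct and follows essentially the same route as the paper: factor any admissible $\gamma$ as $\pi(\x_0,\x_1)\otimes\delta_{\mathbf{v}_1}\otimes\cdots\otimes\delta_{\mathbf{v}_N}$ and reduce the cost to the Euclidean least-squares functional, with your observation that $\inf_\pi\int J\,d\pi=\inf J$ (attained at a Dirac) simply making explicit the step the paper leaves implicit. The only minor imprecision is the claim that $J$ is coercive, which fails if all $t_i$ coincide, but the equality of infima holds regardless, so nothing breaks.
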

\begin{proof}
Any $\gamma$ that satisfies the marginal constraints in (\ref{main_eq0}) can be written as $\gamma=\pi(\x_0,\x_1)\otimes \delta_{\mathbf{v}_1}(\y_1)\otimes...\otimes \delta_{\mathbf{v}_N}(\y_N)$, where $\pi(\x_0,\x_1)\in \PDD$ and $\otimes$ represents the product of two measures. Using this fact, we can show that
\begin{align*}
\label{discrete_consistency}
&\scriptstyle\inf_{\substack{\gamma}}  \int_{\mathbf{R}^{(N+2)d}} \sum_{i=1}^{N}\frac{1}{N} ||(1-t_i)\x_0+t_i\x_1-\y_i||^2d\gamma\nonumber \\
&\scriptstyle=\inf_{\substack{\mathbf{w}_0,\mathbf{w}_1\in \Rd}} \sum_{i=1}^{N}\frac{1}{N} ||(1-t_i)\mathbf{w}_0+t_i\mathbf{w}_1-\mathbf{v}_i||^2.
\end{align*}
\end{proof}

\subsection{Measure-valued PCA}
Classical PCA produces subspaces of a given dimension that approximate a data point cloud. 
Their projection onto these subspaces can serve as approximate coordinates for the data points.  
Our aim is to extend the concept so as to approximate and parametrize families of probability measures~$\{\mu_{i}\}_{i=1}^N$.  In this context, the first principal component corresponds to a line segment in $\PD$ that is at a minimal distance from the data points which are now probability distributions. To this end, it is natural to consider the following problem
\begin{equation*}
\hspace*{-3pt}\inf_{{\scriptscriptstyle \pi\in \PDD}} \hspace*{-2pt}\sum_{i=1}^{N}\frac{1}{N} \min_{t\in \mathbf{R}} {W_2^2(((1-t)\x_0+t\x)}_{\#}\pi,\mu_{i}).
\end{equation*}
An equivalent multi-marginal formulation is
\begin{align}
\label{dim_red}
&\inf_{\substack{\gamma}}  \sum_{i=1}^{N} \hspace*{-4pt}\frac{1}{N} \hspace*{-1pt}\min_{t\in \mathbf{R}} \int_{\mathbf{R}^{(N+2)d}} \hspace*{-7pt} ||(1-t)\x_0+t\x_1-\y_i||^2d\gamma \nonumber \\
&\quad \quad \quad \text{subject to}\quad {\y_i}_\#\gamma=\mu_{i}(\y_i), \forall i=1,...,N 
\end{align}
where $\gamma(\x_0,\x_1,\y_1,...,\y_N)\in \mathcal{P}_2(\mathbf{R}^{(N+2)d})$.
However, in the present setting of $\PD$, the problem is non-convex.
We employ coordinate descent method~\cite{wright2015coordinate} in seeking a minimizer, although global optimality is not guaranteed. 

The key idea is to successively minimize for $t$ and $\gamma(\x_0,\x_1,\y_1,...,\y_N)$ to find the minimum of (\ref{dim_red}). To do so, at each iteration, we fix either $\gamma$ or the parameters $t$, and optimize in the other direction. With respect to $t$ the problem is quadratic assuming the closed-form solution
\begin{equation}
    \label{update_t}
    t^*_i=\frac{\int_{\mathbf{R}^{3d}}  <\y_i-\x_0,\x_1-\x_0> d\eta_i(\x_0,\x_1,\y_i)}{\int_{\mathbf{R}^{2d}}  ||\x_1-\x_0||^2 d\pi_i(\x_0,\x_1)}
\end{equation}
where $\eta_i(\x_0,\x_1,\y_i)=(\x_0,\x_1,\y_i)_{\#}\gamma$ and $<.,.>$ denotes a corresponding inner product. 
Minimization over $\gamma$ is a linear programming problem.

\section{Results for Gaussian measures}
In this section, we consider Gaussian target distributions, i.e., $\mu_{t_i}(\y_i)\sim N(\bol{m}_{\y_i},\bol{C}_{\y_i})$. By applying the proposed method to this problem, we will show that the optimal interpolating curve remains Gaussian at each time step. Regarding (\ref{Gaussian_W2}), we observe that in  regression analysis for Gaussian measures, the means can be treated separately by linear regression in Euclidean space. Thereby, the means for the optimal curve in $(\PD,W_2)$ can be readily found. Therefore, without loss of generality, we assume the means of $\mu_{t_i}$'s to be zero. The following proposition recast the problem as an SDP.
\begin{prop}
Consider Gaussian-points $\mu_{i}\sim N(0,\bol{C}_{\y_i})$. The minimizing $\gamma^*(\x_0,\x_1,\y_1,...\y_N)$ in \eqref{dim_red}
 is a Gaussian distribution with mean zero and covariance that solves
\begin{align}
\label{SDP}
   {\scriptstyle\min_{\mathbf{C}_{\gamma}\succeq 0} }
   &{\scriptstyle\text{tr}((1-2\overline{t}+\overline{t}^2)\bol{C}_{\x_0}+\overline{t^2}\bol{C}_{\x_1}+ 2(\overline{t}-\overline{t^2})\bol{S}_{{\x_0}{\x_1}})} \nonumber\\
   &{\scriptstyle-\frac{2}{N}\sum_{i=1}^{N} \text{tr}((1-t_i)\bol{S}_{{\x_0}{\y_i}}+t_i\bol{S}_{{\x_1}{\y_i}})} \nonumber\\
    %&\qquad \quad s.t. \quad \bol{S}_{0f}^2=\bol{C}_0\bol{C}_f, \nonumber\\
    %&\qquad \quad  \qquad \qquad \mathbf{C}\succeq 0
\end{align}
where $\overline{t}=1/N\sum_{i=1}^{N}t_i$, $\overline{t^2}=1/N\sum_{i=1}^{N}t_i^2$ and
\begin{equation}
\label{Covariance_Matrix}
\mathbf{C}_{\gamma}=
\left[
    \begin{smallmatrix}
    \bol{C}_{\x_0}       & \bol{S}_{{\x_0}{\x_1}} & \bol{S}_{{\x_0}{\y_1}} & \dots & \bol{S}_{{\x_0}{\y_N}} \\
    \bol{S}_{{\x_0}{\x_1}}^T       & \bol{C}_{\x_1} & \bol{S}_{{\x_1}{\y_1}} & \dots & \bol{S}_{{\x_1}{\y_N}} \\
    \bol{S}_{{\x_0}{\y_1}}^T       & \bol{S}_{{\x_1}{\y_1}}^T & \bol{C}_{\y_1} & \dots & \bol{S}_{{\y_1}{\y_N}} \\
    \vdots & \vdots & \vdots & \ddots \\
    \bol{S}_{{\x_0}{\y_N}}^T       & \bol{S}_{{\x_1}{\y_N}}^T & \bol{S}_{{\y_1}{\y_N}} & \dots & \bol{C}_{\y_N}
\end{smallmatrix} \right].
\end{equation}

\end{prop}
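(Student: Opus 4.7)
The plan is to show that (i) the infimum over all feasible couplings $\gamma$ is attained on a Gaussian, and (ii) restricted to Gaussians the problem is precisely the semidefinite program displayed in the statement.

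First I would exploit the fact that the integrand $\sum_i \tfrac{1}{N}\|(1-t_i)\x_0+t_i\x_1-\y_i\|^2$ is a quadratic polynomial in $(\x_0,\x_1,\y_1,\dots,\y_N)$, so the expected cost $\int(\cdot)\,d\gamma$ depends on $\gamma$ only through its first and second moments. Given any feasible $\gamma$, form the Gaussian $\tilde\gamma$ with the same mean vector and covariance matrix. The quadratic cost is unchanged, $F(\tilde\gamma)=F(\gamma)$, and the $\y_i$-marginal of $\tilde\gamma$ is automatically Gaussian with mean $0$ and covariance $\bol{C}_{\y_i}$, hence equals $\mu_{t_i}$. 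Thus $\tilde\gamma$ is feasible as well, and the infimum over all couplings coincides with the infimum over Gaussian ones.

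Among zero-mean Gaussian $\gamma$, the marginal constraints fix $m_{\y_i}=0$, while $m_{\x_0}$ and $m_{\x_1}$ are free. Expanding the integrand as a sum of squared norms of means and variance contributions gives
\[
F(\gamma)=\sum_{i=1}^N\tfrac{1}{N}\|(1-t_i)m_{\x_0}+t_im_{\x_1}\|^2+\Phi(\mathbf{C}_\gamma),
\]
where $\Phi$ depends only on the covariance. The mean term is nonnegative and vanishes at $m_{\x_0}=m_{\x_1}=0$, so one may restrict to zero-mean Gaussians without loss of generality (under the mild assumption that the $t_i$'s are not all equal this is in fact forced; otherwise the problem collapses to a single marginal). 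This reduces the problem to an optimization over admissible covariance matrices $\mathbf{C}_\gamma$ of the block form \eqref{Covariance_Matrix}.

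Next I would compute $\Phi(\mathbf{C}_\gamma)$ explicitly. For a zero-mean Gaussian, $\mathbb{E}\|(1-t_i)\x_0+t_i\x_1-\y_i\|^2$ expands as
\[
(1-t_i)^2\mathrm{tr}(\bol{C}_{\x_0})+t_i^2\mathrm{tr}(\bol{C}_{\x_1})+2t_i(1-t_i)\mathrm{tr}(\bol{S}_{\x_0\x_1})-2(1-t_i)\mathrm{tr}(\bol{S}_{\x_0\y_i})-2t_i\mathrm{tr}(\bol{S}_{\x_1\y_i})+\mathrm{tr}(\bol{C}_{\y_i}).
\]
Averaging over $i$ and collecting with $\overline{t}=\tfrac{1}{N}\sum_i t_i$ and $\overline{t^2}=\tfrac{1}{N}\sum_i t_i^2$ recovers, up to the $\gamma$-independent constant $\tfrac{1}{N}\sum_i\mathrm{tr}(\bol{C}_{\y_i})$, the objective in \eqref{SDP}. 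The only constraint on $\mathbf{C}_\gamma$ beyond the prescribed diagonal blocks $\bol{C}_{\y_i}$ is positive semidefiniteness, i.e. $\mathbf{C}_\gamma\succeq 0$, which characterizes covariance matrices of Gaussian measures.

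The main effort lies in the Gaussian reduction, which depends crucially on the cost being quadratic and on the marginal constraints being of Gaussian type; both are genuine features of this setting and make the argument tight. The trace bookkeeping in the last step is mechanical, and existence of a minimizer follows from compactness of the feasible set of PSD matrices with fixed diagonal blocks together with continuity of the linear objective.
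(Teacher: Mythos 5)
Your proof is correct and takes essentially the same route as the paper's: the paper likewise observes that the quadratic cost and the marginal constraints involve only first and second moments, so the optimizer may be taken Gaussian, and then expands the expected cost into the trace expression of \eqref{SDP}; your moment-matching construction of $\tilde\gamma$ is simply a more explicit rendering of that one-line argument (and your expansion correctly yields the coefficient $1-2\overline{t}+\overline{t^2}$ for $\bol{C}_{\x_0}$, which is evidently what the paper's $\overline{t}^{\,2}$ is meant to read). The only slip is in your existence argument: the set of matrices $\mathbf{C}_\gamma\succeq 0$ with prescribed blocks $\bol{C}_{\y_i}$ is \emph{not} compact, since $\bol{C}_{\x_0}$ and $\bol{C}_{\x_1}$ are unbounded above; existence instead follows from coercivity of the quadratic cost in the second moments of $\x_0,\x_1$ (valid once at least two of the $t_i$ are distinct), or from the general multi-marginal existence result the paper invokes elsewhere.
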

\begin{proof}
When the marginals (${\{\mu_{t_i}\}}_{i=1}^N$) of the joint measure $\gamma$ in \eqref{main_eq0} are Gaussian, we can conclude that $\gamma^*$ in \eqref{main_eq0} is also Gaussian since the marginal constraints act only on the second moments of $\gamma$, namely, no constraint is imposed onto higher-order moments, and the cost function is quadratic in $\x_0,\x_1,\y_1,...,\y_N$.
Simple calculation shows the cost function in (\ref{main_eq0}) for any Gaussian $\gamma$ with covariance matrix given in (\ref{Covariance_Matrix}) reads
\begin{align*}
&\scriptstyle{\int_{\mathbf{R}^{(N+2)d}} \sum_{i=1}^{N}\frac{1}{N} ||(1-t_i)\x_0+t_i\x_1-\y_i||^2d\gamma =}\\ \nonumber
&(\scriptstyle{1-2\overline{t}+\overline{t}^2)\text{tr}(\bol{C}_{\x_0})+\overline{t^2}\text{tr}(\bol{C}_{\x_1})+2(\overline{t}-\overline{t^2})\text{tr}(\bol{S}_{{\x_0}{\x_1}})}\\ \nonumber
   &\scriptstyle{-\frac{2}{N}\sum_{i=1}^{N} \text{tr}((1-t_i)\bol{S}_{{\x_0}{\y_i}}+t_i\bol{S}_{{\x_1}{\y_i}})}.
\end{align*}
 From this, we can readily conclude that the multi-marginal formulation in (\ref{main_eq0}) is equivalent to the semi-definite programming in (\ref{SDP}).
\end{proof}

Noticing that $(\x_0,\x_1)_\#\gamma=\pi$, one can write the interpolating curve as
\begin{equation}
\label{interp_curve_relaxed}
   \scriptstyle{ \mu_t \sim N(0, (1-t)^2\bol{C}^*_{\x_0}+t^2\bol{C}^*_{\x_1}+t(1-t)(\bol{S}^*_{{\x_0}{\x}}+{\bol{S}^*_{{\x_0}{\x}}}^T))}
\end{equation}
where asterisk in the equation above denotes the optimal values obtained in (\ref{SDP}). 
One can compare the result above to (\ref{Geodesic_Gaussian}) by noticing that in \eqref{interp_curve_relaxed} the term $\bol{S}^*_{{\x_0}{\x_1}}$ is not equal to $(\bol{C}^*_{\x_0}\bol{C}^*_{\x_1})^{1/2}$. 
Therefore, the interpolating curve ($\mu_t$) in (\ref{interp_curve_relaxed}) doesn't characterize a geodesic in $(\PD,W_2)$, which may underfit the dataset as discussed earlier.   

We conclude with numerical examples of density curves that interpolate a set of given Gaussian
marginals. To do so, first we consider 30 different marginals to be one-dimensional Gaussian distributions with zero
mean (blue curves in Fig. \ref{Gaussian_example}), where the focus is on interpolation of the respective variances.  Then, the SDP in (\ref{SDP}) is solved to obtain the optimal joint distributions $\gamma$ and $\pi(\x_0,\x_1)=(\x_0,\x_1)_{\#}\gamma$. Figure \ref{MV_reg} depicts the density curve for different values of $t$ along with the dataset. Furthermore, to compare the result to geodesic regression, in Fig. \ref{Geo_reg} the optimal geodesic in $(\PD,W_2)$, which interpolates the dataset, is represented. One can notice that the measure-valued regression captures the variation in the dataset better than geodesic one. This ensues from the fact that in geodesic regression an interpolating curve in $(\PD,W_2)$ with highest correlated end-points is sought. However, in the framework of this paper, this constraint is relaxed which can also moderate underfitting.

%shown in these plots, which delineate the values of interpolating variance as function
%of t, differ from cubic splines on R; cubic splines would not preserve positivity in
%general, whereas our construction in (47)–(49) obviously does.

\begin{figure}[!tb]  
\centering
\subfigure[Measure-valued regression]{\label{MV_reg}\resizebox{!}{5cm}{\includegraphics{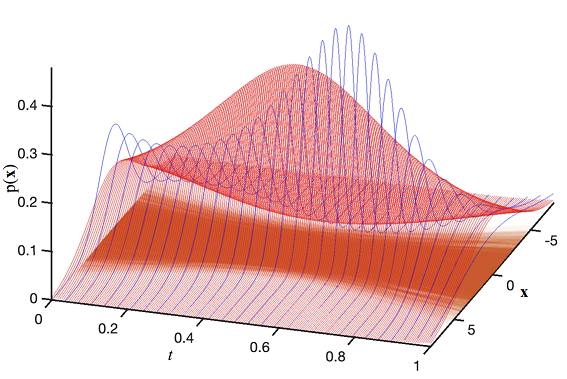}}}
\subfigure[Geodesic regression]{\label{Geo_reg}\resizebox{!}{5cm}{\includegraphics{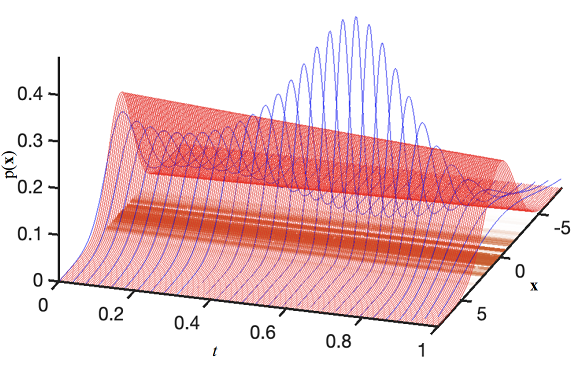}}}
\caption{Interpolation of one-dimensional Gaussian marginals. Blue curves are the given distributions and red ones are the interpolation. The intensity of color in lines is proportional to the likelihood of each path.}
\label{Gaussian_example}
\end{figure}

%%%%%%%%%%%%%%%%%

%%%%%%%%%%%%%%%%%%%%%%%%%%%%%%%%%%%%%%%%%%%%%%%%%%%%%%%%%%%%%%%%%%%%%%%%%%%%%%%%

\addtolength{\textheight}{-3cm}   % This command serves to balance the column lengths
                                  % on the last page of the document manually. It shortens
                                  % the textheight of the last page by a suitable amount.
                                  % This command does not take effect until the next page
                                  % so it should come on the page before the last. Make
                                  % sure that you do not shorten the textheight too much.

%%%%%%%%%%%%%%%%%%%%%%%%%%%%%%%%%%%%%%%%%%%%%%%%%%%%%%%%%%%%%%%%%%%%%%%%%%%%%%%%

%%%%%%%%%%%%%%%%%%%%%%%%%%%%%%%%%%%%%%%%%%%%%%%%%%%%%%%%%%%%%%%%%%%%%%%%%%%%%%%%
\section{CONCLUSIONS AND FUTURE WORKS}
In this paper we introduce an approach for statistical learning in the space of distributions, metrized by the Wasserstein metric, which can be regarded as a generalization of regression analysis and PCA to the space of probability measures. To alleviate the issues with non-linearity of this formulation, we capitalize on a link between this problem and multi-marginal transport.  In addition to computational benefits and consistency with Euclidean regression, our model can readily incorporate correlations between distributions (i.e., available higher-dimensionality marginals) in the dataset. The potential of the theory is envisaged in aggregate data inference with application to metapopulation dynamics~\cite{nichols2017using} where the identity of individuals is not available,  particle \& power spectra  tracking~\cite{jiang2011geometric}, system identification~\cite{ljung2010perspectives}, and measurement noise  and  uncertainty treatment in dynamical systems. A natural future research direction is towards utilizing higher-order curves in ($\PD,W_2$) along a dynamic formulation analogous to the proposed method.

%%%%%%%%%%%%%%%%%%%%%%%%%%%%%%%%%%%%%%%%%%%%%%%%%%%%%%%%%%%%%%%%%%%%%%%%%%%%%%%%
%\section{ACKNOWLEDGMENTS}

%%%%%%%%%%%%%%%%%%%%%%%%%%%%%%%%%%%%%%%%%%%%%%%%%%%%%%%%%%%%%%%%%%%%%%%%%%%%%%%%

\spacingset{.97}
\bibliographystyle{IEEEtran}
\bibliography{Reference_final}

\end{document}